\documentclass[11pt]{amsart}
\usepackage[margin=1in]{geometry}
\usepackage{amsmath,amssymb,amsthm,mathtools}
\usepackage{microtype}
\usepackage{enumitem}
\usepackage[hidelinks]{hyperref}
\usepackage{fancyhdr}

\newtheorem{theorem}{Theorem}[section]
\newtheorem{proposition}[theorem]{Proposition}
\newtheorem{lemma}[theorem]{Lemma}

\theoremstyle{definition}

\newcommand{\Vol}{\operatorname{Vol}}
\newcommand{\inj}{\operatorname{inj}}
\newcommand{\diam}{\operatorname{diam}}
\newcommand{\Hess}{\operatorname{Hess}}
\newcommand{\dist}{\operatorname{dist}}

\title{Almost Positive Sectional Curvature with Bounded Geometry on $S^2\times S^2$}
\author{Yuhang Liu\\Xi'an Jiaotong-Liverpool university, Department of Pure Mathematics, \\111 Ren'ai Road, Suzhou Industrial Park, Suzhou, Jiangsu Province, China\\yuhang.liu02@xjtlu.edu.cn\\
+86-0512-89167161}
\date{2026-8-21}
\keywords{Sectional Curvature, Bounded Geometry}
\subjclass[2010]{53C21}

\begin{document}
\maketitle

\begin{abstract}
Let $M=S^2\times S^2$.  We prove that there is a fixed smooth open ball $U\subset M$ such that, for every $\varepsilon>0$, one can find a smooth metric $g_\varepsilon$ and a closed set $S_\varepsilon$ with the following properties: $\sec_{g_\varepsilon}>0$ on $M\setminus S_\varepsilon$; the relative volume of $S_\varepsilon$ is less than $\varepsilon$; every sectional curvature on $U$ is greater than $1$; $-\varepsilon<\sec_{g_\varepsilon}\leq\Lambda$ for one constant $\Lambda$ independent of $\varepsilon$; the volume and diameter have uniform positive lower and upper bounds; and the injectivity radius has a uniform positive lower bound.  In fact, $U\cap S_\varepsilon=\varnothing$.

The construction starts from the product of the unit round metrics.  A small conformal factor, obtained by smoothing the absolute value of the height function on each sphere, has negative definite Hessian away from two thin equatorial bands.  It makes every formerly flat mixed plane strictly positive outside a set of arbitrarily small relative volume, while the negative curvature created in the bands is only $O(\varepsilon)$.  A second conformal deformation supported in a very small region produces a core on which all sectional curvatures are greater than $1$; its transition has Hessian bounded above by $O(\varepsilon)$.  Finally, a diffeomorphism compresses the fixed topological ball $U$ into that core.  Pullback preserves all intrinsic bounded-geometry estimates. The main content of this paper is generated by ChatGPT 5.6 and verified by the author.
\end{abstract}

\section{Introduction and Statement}
The main purpose of this paper is to try to produce a metric on $S^2\times S^2$ with quasi-positive sectional curvature, which is not successful. We note that recently Brendle and Hung constructed metrics with positive sectional curvature on $S^2\times S^2$, resolving the Hopf conjecture \cite{Brendle}. However, since the metrics constructed in this paper is relatively simple and explicit, we post it as a note of independent interest. Main techniques used in this paper can be found in standard textbooks like \cite{Klingenberg, Petersen}.

\textbf{Acknowledgement:} the author thanks Xi'an Jiaotong-Liverpool University for providing financial and academic support. He also thanks Zipei Nie for providing access to ChatGPT and insights about AI usage. His gratitude also goes to his daughter and son who were born last year, and all of his family members who have always been supportive during his academic career.\\

Let $h$ be the unit round metric on $S^2$, and put
\[
 g_0=h\oplus h
 \quad\text{on}\quad
 M=S^2\times S^2.
\]
An open ball below means a subset diffeomorphic to the standard open $4$-ball; it is fixed as a subset of the smooth manifold and is not required to have a fixed intrinsic radius for the varying metrics.

\begin{theorem}\label{thm:main}
There exist a smooth open ball $U\subset M$ and constants
\[
 0<v_-\le v_+<\infty,\qquad
 0<d_-\le d_+<\infty,\qquad
 i_0>0,\qquad \Lambda<\infty,
\]
such that, for every $\varepsilon>0$, there are a smooth Riemannian metric $g_\varepsilon$ on $M$ and a closed subset $S_\varepsilon\subset M$ satisfying
\begin{enumerate}[label=\textup{(\arabic*)}]
 \item $\sec_{g_\varepsilon}>0$ on $M\setminus S_\varepsilon$;
 \item
 \[
  \frac{\Vol(S_\varepsilon,g_\varepsilon)}
       {\Vol(M,g_\varepsilon)}<\varepsilon;
 \]
 \item $U\cap S_\varepsilon=\varnothing$, and every two-plane based at a point of $U$ has sectional curvature greater than $1$;
 \item
 \[
  -\varepsilon<\sec_{g_\varepsilon}\le\Lambda
  \quad\text{on }M;
 \]
 \item
 \[
  v_-\le\Vol(M,g_\varepsilon)\le v_+,
  \qquad
  d_-\le\diam(M,g_\varepsilon)\le d_+;
 \]
 \item $\inj(M,g_\varepsilon)\ge i_0$.
\end{enumerate}
\end{theorem}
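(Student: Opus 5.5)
The plan follows the three-stage construction outlined in the abstract. All estimates are made for a metric $\bar g_\varepsilon$, and at the end everything is transported to a fixed ball $U$ by a diffeomorphism.

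\textbf{Stage 1 (global quasi-positivity).} Let $x$ and $y$ be the height functions on the two factors. Fix $\delta>0$ small and let $\phi_\delta$ be a smooth even convex function with $\phi_\delta(t)=|t|$ for $|t|\ge\delta$ and $\phi_\delta''\le C/\delta$. Set $f=\phi_\delta(x)+\phi_\delta(y)$ and $g_1=e^{-2\eta f}g_0$ with $\eta$ small.

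Since the conformal factor is $e^{-2\eta f}$, the conformal change formula for sectional curvature gives, for a $g_0$-orthonormal plane $\{e_1,e_2\}$,
\[
e^{-2\eta f}\sec_{g_1}=\sec_{g_0}+\eta\bigl(\Hess f(e_1,e_1)+\Hess f(e_2,e_2)\bigr)+O(\eta^2).
\]
On $S^2$ we have $\Hess_h x=-x\,h$. Hence for $|x|\ge\delta$ the Hessian of $\phi_\delta(x)$ is $-|x|h\le-\delta h$, which is negative definite.

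For a mixed plane spanned by $a+b$ and $c+d$, the $g_0$-curvature is $|a\wedge c|^2+|b\wedge d|^2$. This is nonnegative and vanishes exactly on the flat mixed planes. The first-order correction is then at most $-\eta\delta$ times a positive quantity, and in particular it is strictly negative on the flat planes. We would check, using compactness of the Grassmannian, that the correction dominates the $O(\eta^2)$ error uniformly once $\eta\ll\delta^2$. The same comparison has to be done near the flat planes, where the base curvature is small but not zero. This yields $\sec>0$ off the bands $\{|x|<\delta\}\cup\{|y|<\delta\}$.

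Inside the bands the Hessian is at most $\eta C/\delta$. So $\sec\ge-C\eta/\delta$, which is $<\varepsilon$ once $\eta\le\varepsilon\delta/C$. The bands have relative volume $O(\delta)<\varepsilon/2$. We take $S_\varepsilon$ to be the closed bands.

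\textbf{Stage 2 (the core).} Choose a point $p$ with $|x|,|y|>1/2$. Near $p$, perform a second conformal change $e^{2u}$ with $u=-A r^2/2$ on a tiny ball $B_\rho(p)$, rescaled so that the curvature there is $\gg1$. Work with the metric $e^{-2u}g$ shrinking the core; its Hessian $+A$ times the identity raises every curvature by $\approx 2A$, so every curvature on a small core ball $B_{\rho/2}(p)$ exceeds $1$.

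The transition from $u$ to $0$ is the main obstacle. There we need $\Hess u\le O(\varepsilon)$ in the sign convention that lowers curvature, and also gradient terms small enough that they do not destroy positivity. We would try a function of $\log r$, of Hawking-type or Gromov--Lawson bending type, so that the negative second derivative is spread over many dyadic scales. The available margin is $\eta\delta$. If instead the transition only produces a loss of size $O(\varepsilon)$, we would add the transition annulus to $S_\varepsilon$, since its volume is tiny. In either case $\Lambda$ depends only on $A$, which we fix independently of $\varepsilon$.

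\textbf{Stage 3 (pullback and bounds).} Fix $U$ to be a ball around $p$ and pick a diffeomorphism $\Phi_\varepsilon$ isotopic to the identity that maps $U$ into the core $B_{\rho/2}(p)$, avoiding the bands. Set $g_\varepsilon=\Phi_\varepsilon^*\bar g_\varepsilon$ and $S_\varepsilon=\Phi_\varepsilon^{-1}(\bar S_\varepsilon)$. Since $\Phi_\varepsilon$ is an isometry from $g_\varepsilon$ to $\bar g_\varepsilon$, properties (1)--(4) are preserved, along with volume ratios, diameter and injectivity radius.

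Because $\eta\to0$, the conformal factors are uniformly $C^0$-close to $1$. This gives uniform volume and diameter bounds, with the core contributing negligibly. Finally, for the injectivity radius we combine $|\sec|\le\Lambda$, the volume lower bound and the diameter upper bound, and apply Cheeger's lemma to obtain $i_0$.
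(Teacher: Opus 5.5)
Your overall architecture matches the paper's: a smoothed-$|z|$ exponent on each factor, then a localized radial bump at a point off the bands, then a compressing diffeomorphism. However, two steps fail as written.

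\textbf{The conformal sign is reversed.} For $g_1=e^{-2\eta f}g_0$, your own formula gives, for a flat mixed plane at a point of $M\setminus\Sigma_\delta$,
\[
 e^{-2\eta f}\sec_{g_1}(P)=\eta\bigl(\Hess f(e_1,e_1)+\Hess f(e_2,e_2)\bigr)+\eta^2\bigl(df(e_1)^2+df(e_2)^2-|\nabla f|^2\bigr)\le-2\eta\delta<0 .
\]
This holds because $\Hess f\le-\delta g_0$ off the bands and the gradient bracket is never positive. Flat mixed planes exist at every point, so (1) fails on $M\setminus\Sigma_\delta$, which has relative $g_0$-volume $(1-\delta)^2$. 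You even note that the correction is strictly negative on flat planes, and then conclude positivity. The exponent must have negative definite Hessian, i.e.\ $g_1=e^{2\eta f}g_0$ as in \eqref{eq:conformal-curvature}. Then every plane gains at least $2\eta\delta$ and loses at most $2\eta^2$ to gradient terms, so $\eta<\delta$ suffices and no Grassmannian compactness argument is needed. Your band estimate $-C\eta/\delta$ is in fact the one for this corrected sign. The same reversal recurs in Stage 2. An exponent with Hessian $+A$ lowers every curvature by about $2A$, and $e^{-2u}=e^{Ar^2}$ enlarges rather than shrinks. The core needs an exponent like $w(0)-Ar^2/2$; the additive constant is what lets $w$ then decrease monotonically to $0$.

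\textbf{The Stage 2 transition is never constructed.} This is the more substantive gap, and it is exactly the content of Lemma~\ref{lem:needle}. Putting the annulus into $S_\varepsilon$ only handles (1)--(2). For (3)--(6) you still need a single exponent $w$ with:
\begin{itemize}
 \item $\Hess w\le -3$ on a core, with $A$ fixed;
 \item $\Hess w\le O(\varepsilon)$ everywhere;
 \item $\Hess w\ge -C_*$ with $C_*$ independent of $\varepsilon$, so that $\Lambda$ is uniform;
 \item $w$ small in $C^1$, so that the factor $e^{-2f}$ and the gradient terms are harmless on the core and (5)--(6) hold.
\end{itemize}
You state this requirement (``if the transition only produces a loss of size $O(\varepsilon)$'') but never show it can be met. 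The missing idea is plain scale separation; no $\log r$ or dyadic bending is needed. Take $w=F(r)$ with $F''=-A$ on $[0,\ell]$, where $\ell=c\sigma R$. Follow this with a positive $F''\le\sigma/4$ spread over $(3\ell,R-\ell)$ whose mass exactly cancels the negative part. Then $F'\le 0$ throughout, so the tangential term $F'(r)\,r^{-1}(g-dr^2)$ in \eqref{eq:hess-radial-w} is nonpositive. The curvature error in $\Hess r$ costs only $O(\sigma R^2)$, and one gets $|dw|\lesssim\sigma R$ and $0\le w\lesssim\sigma R^2$.

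With that lemma and the signs corrected, the rest of your plan goes through. Your use of Cheeger's lemma for $\inj$ (two-sided curvature bound plus volume and diameter bounds) is a valid, if heavier, alternative to Lemma~\ref{lem:inj}. That lemma instead uses the uniform Christoffel bound coming from $|df|_{g_0}\le\tau/20$.
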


The freedom removed from the earlier one-region formulation is crucial.  The exceptional set constructed here is distributed along two thin equatorial slabs, together with one small transition annulus.  Its volume tends to zero, but it is not forced to have small diameter.

\section{The product metric and conformal curvature}

For a two-plane $P\subset T_{(x,y)}M$, choose a $g_0$-orthonormal basis
$X=(X_1,X_2)$, $Y=(Y_1,Y_2)$.  The product curvature tensor gives
\begin{equation}\label{eq:product-curvature}
 \sec_{g_0}(P)
 =|X_1\wedge Y_1|_h^2+|X_2\wedge Y_2|_h^2.
\end{equation}
Hence
\begin{equation}\label{eq:product-bounds}
 0\le\sec_{g_0}\le1.
\end{equation}
The zero-curvature planes are precisely the mixed planes.

We shall repeatedly use the conformal curvature formula.  If
\[
 \widetilde g=e^{2f}g
\]
and $e_1,e_2$ are $g$-orthonormal vectors spanning $P$, then
\begin{equation}\label{eq:conformal-curvature}
\begin{split}
 e^{2f}\sec_{\widetilde g}(P)
 ={}&\sec_g(P)-\Hess_g f(e_1,e_1)-\Hess_g f(e_2,e_2)\\
 &+df(e_1)^2+df(e_2)^2-|\nabla^g f|_g^2.
\end{split}
\end{equation}
In particular,
\begin{equation}\label{eq:gradient-worst}
 df(e_1)^2+df(e_2)^2-|\nabla f|^2\ge-|\nabla f|^2.
\end{equation}

\section{An almost concave function on the sphere}

Let $z:S^2\to[-1,1]$ be the height function.  For the unit round metric,
\begin{equation}\label{eq:height}
 |\nabla z|^2=1-z^2,
 \qquad
 \Hess_h z=-z h.
\end{equation}

Choose once and for all an even nonnegative function
$\beta\in C_c^\infty((-1,1))$ with $\int_{\mathbb R}\beta=1$, and put
\[
 \beta_\delta(t)=\delta^{-1}\beta(t/\delta),
 \qquad
 \psi_\delta=|\cdot|*\beta_\delta,
 \qquad 0<\delta<\frac14.
\]
Let
\[
 C_\beta=2\|\beta\|_\infty.
\]

\begin{lemma}\label{lem:smoothing}
The function $\psi_\delta$ is smooth, even and convex, and it satisfies
\begin{enumerate}[label=\textup{(\roman*)}]
 \item $\psi_\delta(t)=|t|$ whenever $|t|\ge\delta$;
 \item $|\psi_\delta'|\le1$, $t\psi_\delta'(t)\ge0$, and
 \[
 0\le\psi_\delta''(t)\le\frac{C_\beta}{\delta};
 \]
 \item $0\le\psi_\delta(t)\le1+\delta$ for $|t|\le1$.
\end{enumerate}
If $\phi_\delta=\psi_\delta\circ z$, then
\begin{equation}\label{eq:hess-phi}
 \Hess_h\phi_\delta
 =\psi_\delta''(z)\,dz\otimes dz-z\psi_\delta'(z)h.
\end{equation}
Consequently, as quadratic forms,
\begin{equation}\label{eq:hess-global-bounds}
 -h\le\Hess_h\phi_\delta
 \le\frac{C_\beta}{\delta}h,
 \qquad
 |\nabla\phi_\delta|\le1,
\end{equation}
and on $\{|z|>\delta\}$,
\begin{equation}\label{eq:hess-concave}
 \Hess_h\phi_\delta=-|z|h\le-\delta h.
\end{equation}
\end{lemma}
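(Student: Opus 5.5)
The plan is to work first with the one-variable function $\psi_\delta=|\cdot|*\beta_\delta$ and only afterwards pass to the sphere through the chain rule. Smoothness is immediate, because $\beta_\delta$ is smooth with compact support: $\psi_\delta(t)=\int|s|\beta_\delta(t-s)\,ds$ can be differentiated under the integral sign, moving the derivatives onto $\beta_\delta$. Evenness follows by substituting $s\mapsto -s$ and using that $\beta$ is even. Convexity holds because $\psi_\delta$ is an average of translates of the convex function $|\cdot|$ against a nonnegative weight. The key identity is $\psi_\delta''=2\beta_\delta$ in the distributional sense, since $|\cdot|''=2\delta_0$. This gives $0\le\psi_\delta''\le 2\|\beta\|_\infty/\delta=C_\beta/\delta$ at once. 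Likewise $\psi_\delta'=\operatorname{sgn}*\beta_\delta$, so $|\psi_\delta'|\le\int\beta_\delta=1$. Moreover $\psi_\delta'$ is odd and nondecreasing, which gives $t\psi_\delta'(t)\ge0$.

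For (i), take $t\ge\delta$. Then $\beta_\delta(t-\cdot)$ is supported in $s\in(t-\delta,t+\delta)\subset(0,\infty)$, so $\psi_\delta(t)=\int s\,\beta_\delta(t-s)\,ds=t-\int u\beta_\delta(u)\,du=t$, where the last integral vanishes by evenness. The case $t\le-\delta$ follows by evenness of $\psi_\delta$. For (iii), use $|s|\le|t|+|t-s|\le 1+\delta$ on the support whenever $|t|\le1$, and note $\psi_\delta\ge0$ because the integrand is nonnegative.

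On the sphere, I would apply the chain rule $\Hess(\psi\circ z)=\psi''(z)\,dz\otimes dz+\psi'(z)\Hess z$ and insert \eqref{eq:height}; this gives \eqref{eq:hess-phi}. The gradient bound comes from $|\nabla\phi_\delta|=|\psi_\delta'(z)|\,|\nabla z|\le\sqrt{1-z^2}\le1$.

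The bounds \eqref{eq:hess-global-bounds} then come from reading off the two terms of \eqref{eq:hess-phi}:
\begin{itemize}
\item The term $-z\psi_\delta'(z)h$ lies between $-h$ and $0$, because $0\le z\psi_\delta'(z)\le|z|\le1$.
\item The term $\psi_\delta''(z)\,dz\otimes dz$ lies between $0$ and $(C_\beta/\delta)|dz|^2h\le(C_\beta/\delta)h$, since $|dz|^2=1-z^2\le1$.
\end{itemize}
Finally, on $\{|z|>\delta\}$, part (i) gives $\psi_\delta'=\operatorname{sgn}$ and $\psi_\delta''=0$ near $z$. Hence $\Hess_h\phi_\delta=-|z|h\le-\delta h$, which is \eqref{eq:hess-concave}.

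The only mildly delicate point is the distributional identity $\psi_\delta''=2\beta_\delta$. To avoid distributions, I would verify it directly by writing $\psi_\delta'(t)=\int_{-\infty}^t\beta_\delta-\int_t^\infty\beta_\delta$ and differentiating once more.
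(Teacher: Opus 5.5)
Your proposal is correct and follows essentially the same route as the paper: convolution preserves smoothness, evenness and convexity; the identity $\psi_\delta''=2\beta_\delta$ gives the one-variable bounds; the support argument proves (i); and the chain rule with $\Hess_h z=-zh$ lets you read off \eqref{eq:hess-global-bounds} and \eqref{eq:hess-concave} term by term. You also spell out details the paper calls immediate, namely part (iii), the gradient bound, and a direct check of $\psi_\delta''=2\beta_\delta$ that avoids distributions.
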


\begin{proof}
Convolution preserves smoothness, evenness and convexity.  If $t\ge\delta$, then $t-s\ge0$ for every $s$ in the support of $\beta_\delta$, and the evenness of $\beta_\delta$ gives
\[
 \psi_\delta(t)=\int(t-s)\beta_\delta(s)\,ds=t.
\]
The case $t\le-\delta$ is identical.  Since the distributional second derivative of $|t|$ is twice the Dirac mass at the origin,
\[
 \psi_\delta''=2\beta_\delta.
\]
The remaining one-dimensional assertions follow immediately.  Formula
\eqref{eq:hess-phi} follows from the chain rule and \eqref{eq:height}.
Because $z\psi_\delta'(z)\ge0$, the second term in
\eqref{eq:hess-phi} is nonpositive.  The upper bound follows from
$dz\otimes dz\le h$, and the lower bound follows from
$|z\psi_\delta'(z)|\le1$.  On $|z|>\delta$ one has
$\psi_\delta(z)=|z|$, which gives \eqref{eq:hess-concave}.
\end{proof}

For $E_\delta=\{|z|\le\delta\}\subset S^2$, spherical coordinates give
\begin{equation}\label{eq:band-area}
 \frac{\operatorname{Area}(E_\delta,h)}
      {\operatorname{Area}(S^2,h)}=\delta.
\end{equation}
Define the two equatorial slabs
\begin{equation}\label{eq:S0}
 \Sigma_\delta
 =(E_\delta\times S^2)\cup(S^2\times E_\delta).
\end{equation}
Then
\begin{equation}\label{eq:S0-volume}
 \frac{\Vol(\Sigma_\delta,g_0)}{\Vol(M,g_0)}
 =2\delta-\delta^2<2\delta.
\end{equation}

For $a>0$, set
\begin{equation}\label{eq:u-def}
 u_{a,\delta}(x,y)
 =a\bigl(\phi_\delta(x)+\phi_\delta(y)\bigr).
\end{equation}
Lemma~\ref{lem:smoothing} gives
\begin{equation}\label{eq:u-global}
 -ag_0\le\Hess_{g_0}u_{a,\delta}
 \le\frac{aC_\beta}{\delta}g_0,
 \qquad
 |\nabla u_{a,\delta}|^2\le2a^2,
\end{equation}
and, on $M\setminus\Sigma_\delta$,
\begin{equation}\label{eq:u-concave}
 \Hess_{g_0}u_{a,\delta}\le-a\delta g_0.
\end{equation}

\begin{proposition}\label{prop:base-conformal}
If $0<a<\delta$, then the metric
\[
 \bar g_{a,\delta}=e^{2u_{a,\delta}}g_0
\]
has strictly positive sectional curvature on
$M\setminus\Sigma_\delta$.  Globally,
\begin{equation}\label{eq:base-lower}
 \sec_{\bar g_{a,\delta}}
 \ge-\frac{2aC_\beta}{\delta}-2a^2,
\end{equation}
and
\begin{equation}\label{eq:base-upper}
 \sec_{\bar g_{a,\delta}}\le1+2a+2a^2.
\end{equation}
\end{proposition}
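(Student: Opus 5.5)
The plan is to apply the conformal curvature formula \eqref{eq:conformal-curvature} with $g=g_0$ and $f=u_{a,\delta}$, and to bound each term using \eqref{eq:product-bounds}, \eqref{eq:u-global} and \eqref{eq:u-concave}. For a two-plane $P$ at a point of $M$, fix $g_0$-orthonormal vectors $e_1,e_2$ spanning it. The right-hand side of \eqref{eq:conformal-curvature} is then a sum of three pieces:
\[
 \sec_{g_0}(P),\qquad
 -\Hess_{g_0}u(e_1,e_1)-\Hess_{g_0}u(e_2,e_2),\qquad
 du(e_1)^2+du(e_2)^2-|\nabla u|^2 .
\]
Each piece is controlled separately, and at the end the factor $e^{2u}$ is removed.

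\emph{Positivity off $\Sigma_\delta$.} On $M\setminus\Sigma_\delta$:
\begin{itemize}
 \item the first piece is $\ge 0$ by \eqref{eq:product-bounds};
 \item by \eqref{eq:u-concave}, each of the two Hessian terms contributes at least $a\delta$, so the second piece is $\ge 2a\delta$;
 \item by \eqref{eq:gradient-worst} and \eqref{eq:u-global}, the third piece is $\ge -|\nabla u|^2\ge -2a^2$.
\end{itemize}
For the gradient bound I will note explicitly that $|\nabla u|^2=a^2\bigl(|\nabla\phi_\delta(x)|^2+|\nabla\phi_\delta(y)|^2\bigr)\le 2a^2$, by Lemma~\ref{lem:smoothing}. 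Hence
\[
 e^{2u}\sec_{\bar g_{a,\delta}}(P)\ge 2a\delta-2a^2=2a(\delta-a)>0,
\]
using $0<a<\delta$. Since $e^{2u}>0$, strict positivity of $\sec_{\bar g_{a,\delta}}$ follows.

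\emph{Global lower bound.} Here I use instead the global bound $\Hess_{g_0}u\le (aC_\beta/\delta)g_0$ from \eqref{eq:u-global}. With the same gradient estimate and $\sec_{g_0}\ge0$, this gives
\[
 e^{2u}\sec_{\bar g_{a,\delta}}\ge -\frac{2aC_\beta}{\delta}-2a^2 .
\]
To remove the factor I use that $u_{a,\delta}\ge0$, which holds because $\psi_\delta\ge0$ and $a>0$. So $e^{-2u}\in(0,1]$. Multiplying a nonpositive lower bound by a number in $(0,1]$ only makes it larger, and \eqref{eq:base-lower} follows. If the left side is positive there is nothing to prove.

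\emph{Upper bound.} I bound each piece from above:
\begin{itemize}
 \item $\sec_{g_0}\le1$ by \eqref{eq:product-bounds};
 \item $-\Hess_{g_0}u(e_i,e_i)\le a$ by the lower bound $\Hess_{g_0}u\ge -ag_0$ in \eqref{eq:u-global};
 \item $du(e_1)^2+du(e_2)^2\le|\nabla u|^2$ because $e_1,e_2$ are orthonormal, so the third piece is $\le0$.
\end{itemize}
Thus $e^{2u}\sec_{\bar g_{a,\delta}}\le 1+2a$. Multiplying a positive upper bound by $e^{-2u}\le1$ preserves it, giving $\sec_{\bar g_{a,\delta}}\le 1+2a\le 1+2a+2a^2$, which is \eqref{eq:base-upper} (the $2a^2$ is slack).

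The only delicate point I expect is the bookkeeping with the conformal factor $e^{2u}$ when converting the bounds. Lower and upper bounds of different signs behave differently under multiplication by $e^{-2u}$, and this is where the sign of $u$ (namely $u\ge0$) must be invoked. Everything else is a direct substitution of Lemma~\ref{lem:smoothing} into \eqref{eq:conformal-curvature}.
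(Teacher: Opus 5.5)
Your proposal is correct and follows the paper's proof: you apply the conformal curvature formula with $f=u_{a,\delta}$, use the concavity estimate off $\Sigma_\delta$ for positivity, use the global upper Hessian bound for the lower estimate, and use $u_{a,\delta}\ge0$ to remove the factor $e^{2u_{a,\delta}}$. The one difference is in the upper bound, where you note that $du(e_1)^2+du(e_2)^2-|\nabla u|^2\le0$ and so get the slightly sharper bound $1+2a$, while the paper bounds that term by $|\nabla u|^2\le 2a^2$.
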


\begin{proof}
Apply \eqref{eq:conformal-curvature} with $g=g_0$ and
$f=u_{a,\delta}$.  On $M\setminus\Sigma_\delta$,
\eqref{eq:u-concave}, \eqref{eq:gradient-worst}, and
\eqref{eq:product-bounds} give
\[
 e^{2u_{a,\delta}}\sec_{\bar g_{a,\delta}}(P)
 \ge2a\delta-2a^2>0.
\]
The global lower bound follows from the upper Hessian estimate in
\eqref{eq:u-global}.  For the upper bound, use
$\Hess u_{a,\delta}\ge-ag_0$ and
\[
 du(e_1)^2+du(e_2)^2-|\nabla u|^2\le|\nabla u|^2.
\]
Finally $u_{a,\delta}\ge0$, so the factor $e^{-2u_{a,\delta}}$ never enlarges a positive upper bound or a negative lower bound in the estimates above.
\end{proof}

\section{A bounded-curvature positive needle}

We next construct a conformal factor whose Hessian is strongly negative on a very small core, while its positive Hessian is arbitrarily small everywhere.

\begin{lemma}[local needle]\label{lem:needle}
Let $(N,g)$ be a smooth closed Riemannian manifold and let $q\in N$.  There are constants
\[
 R_*>0,\qquad c_*>0,\qquad C_*>1,
\]
depending only on $(N,g,q)$, with the following property.  For every
$0<\sigma\le1$ and every $0<R\le R_*$ there is a smooth function
$w=w_{\sigma,R}:N\to[0,\infty)$ such that
\begin{enumerate}[label=\textup{(\roman*)}]
 \item $w=0$ outside $B_g(q,R)$ and in a neighborhood of its boundary;
 \item
 \[
  -C_*g\le\Hess_g w\le\sigma g;
 \]
 \item for some $\rho\ge c_*\sigma R$,
 \[
  \Hess_g w\le-3g
  \quad\text{on }B_g(q,\rho);
 \]
 \item
 \[
  |dw|_g\le C_*\sigma R,
  \qquad
  0\le w\le C_*\sigma R^2.
 \]
\end{enumerate}
\end{lemma}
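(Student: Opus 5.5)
The plan is to take $w$ radial about $q$, that is $w=F(r)$ with $r=\dist_g(q,\cdot)$, and to choose the one-variable profile $F$ so that the Hessian bounds reduce to elementary inequalities for $F'$ and $F''$. Fix $R_*$ smaller than half the injectivity radius at $q$ and small enough that normal-coordinate comparison gives
\[
 \Bigl(\tfrac1r-Ar\Bigr)\bigl(g-dr\otimes dr\bigr)\le\Hess_g r\le\Bigl(\tfrac1r+Ar\Bigr)\bigl(g-dr\otimes dr\bigr),
 \qquad \Hess_g r(\partial_r,\cdot)=0,
\]
on $B_g(q,2R_*)\setminus\{q\}$, with $A$ depending only on $(N,g,q)$ and with $AR_*^2\le\frac1{10}$. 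Then
\[
 \Hess_g w=F''(r)\,dr\otimes dr+F'(r)\Hess_g r .
\]
So in the radial direction the Hessian is $F''$, and in the tangential directions it is $F'(r)\cdot(\text{a factor in }[\tfrac{9}{10r},\tfrac{11}{10r}])$. Because $\Hess_g w$ is block diagonal in these two directions, its quadratic-form bounds are just the bounds on these two blocks.

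\textbf{Profile.} Fix $\kappa=4$ and set $\rho=c\sigma R$ with a small absolute constant $c$, chosen so that $r_1:=\rho+4\kappa\rho/\sigma\le R/2$. Define $F'$ piecewise:
\begin{itemize}
 \item $F'(r)=-\kappa r$ on $[0,\rho]$;
 \item on $[\rho,r_1]$, $F'$ rises linearly with slope $\sigma/4$ from $-\kappa\rho$ to $0$ (this takes length $4\kappa\rho/\sigma$);
 \item $F'=0$ for $r\ge r_1$.
\end{itemize}
Put $F(r)=-\int_r^{\infty}F'$. Then $F\ge0$, and $F=0$ for $r\ge r_1$, hence in a neighbourhood of $\partial B_g(q,R)$.

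The corners at $\rho$ and $r_1$ are removed by mollifying $F'$ at a scale much smaller than $\rho$. This keeps $F'\le0$, keeps $F'(r)\ge-\kappa r$, and keeps $F''\le\sigma/2$; at the corner at $\rho$ it gives $F''\ge-\kappa$, since the mollification is a convex combination of $-\kappa$ and $\sigma/4$. I will mollify only in $[\rho/2,\infty)$, so that $F=C-\kappa r^2/2$ near the origin and $w$ is smooth at $q$.

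\textbf{Verification.} Take the core radius to be $\rho/2\ge \tfrac c2\sigma R$, which gives (iii) with $c_*=c/2$. On $B(q,\rho/2)$ the radial block is $F''=-\kappa=-4\le-3$. The tangential block is $-\kappa r\Hess r\le-\kappa(1-Ar^2)\le-3$.

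On the transition region $F'\le0$ and $\Hess r\ge0$ tangentially, so the tangential block is $\le0$; the radial block is $\le\sigma/2$. This gives the upper bound in (ii).

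For the lower bound, $F''\ge-\kappa$. Tangentially, $|F'|\le\kappa r$ on $[0,\rho]$ and $|F'|\le\kappa\rho\le\kappa r$ beyond $\rho$, so $F'\Hess r\ge-\kappa r\cdot\tfrac{11}{10r}\ge-2\kappa$. Hence $C_*=2\kappa$ (enlarged if needed) works.

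Finally $|dw|=|F'|\le\kappa\rho\le\kappa c\,\sigma R$ and $0\le w\le\kappa\rho\, r_1\le C\sigma R^2$, which gives (iv).

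\textbf{Main obstacle.} The delicate point is the balance in the transition region. The positive second derivative must be at most $\sigma$, while the concave core must have $|F'|\sim\rho$. This is exactly what forces $\rho\sim\sigma R$, and it explains why (iii) is stated with $\rho\ge c_*\sigma R$. Care is needed that the mollification does not spoil the sign $F'\le0$, which is used to kill the tangential term there. I would also check that the comparison for $\Hess r$ holds uniformly up to radius $R_*$ with constants depending only on $(N,g,q)$.
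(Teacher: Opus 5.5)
Your proposal is correct and follows essentially the same route as the paper: a radial profile $F(r)$ with $F''=-4$ on a core of radius $\sim\sigma R$, followed by a recovery phase with $0\le F''\le\sigma/4$ spread over a length $\sim R$ so that $F'$ returns to zero, which is exactly what forces $\rho\sim\sigma R$. The differences are cosmetic. You mollify a piecewise-linear $F'$ instead of using smooth bumps, and you handle the tangential term $F'\Hess_g r$ by a sign argument, using $\Hess_g r\ge(\tfrac1r-Ar)(g-dr\otimes dr)>0$ tangentially together with $F'\le 0$, whereas the paper absorbs the $O(r)$ error $E_r$ into $\sigma$ by shrinking $R_*$.
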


\begin{proof}
Let $r=\dist_g(q,\cdot)$.  After decreasing $R_*$, geodesic polar coordinates are smooth on $0<r<R_*$ and
\begin{equation}\label{eq:hess-r}
 \Hess_g r
 =\frac1r(g-dr^2)+E_r,
 \qquad
 |E_r|_g\le C r,
\end{equation}
for a fixed constant $C$.  Equivalently,
$\Hess_g(r^2/2)=g+O(r^2)$.

We construct a one-variable function $F$.  Put $A=4$.  Choose smooth functions
$\alpha:[0,\infty)\to[0,1]$ and
$\gamma\in C_c^\infty((0,1))$ such that
\[
 \alpha=1\ \text{on }[0,1],\qquad
 \alpha=0\ \text{on }[2,\infty),\qquad
 \gamma\ge0,\qquad \int_0^1\gamma=1.
\]
Choose a sufficiently small fixed constant $c>0$ and put
$\ell=c\sigma R$.  Define
\[
 n(r)=-A\alpha(r/\ell),
 \qquad
 D=-\int_0^R n(s)\,ds,
\]
and put a positive bump of total mass $D$ in the interval
$(3\ell,R-\ell)$:
\[
 p(r)=
 \frac{D}{R-4\ell}
 \gamma\!\left(\frac{r-3\ell}{R-4\ell}\right)
\]
there, and $p=0$ elsewhere.  Taking $c$ small, independently of
$\sigma$ and $R$, ensures
\[
 0\le p\le\frac{\sigma}{4},
 \qquad 4\ell<R.
\]
Set
\[
 q_1=n+p,
 \qquad
 Q(r)=\int_0^r q_1(s)\,ds.
\]
The negative bump occurs before the positive one, the two total masses cancel, and therefore
\begin{equation}\label{eq:Q-properties}
 Q\le0,\qquad Q=0\text{ near }R,
 \qquad |Q|\le C_1\sigma R.
\end{equation}
Moreover,
\begin{equation}\label{eq:q-properties}
 -A\le q_1\le\frac{\sigma}{4},
 \qquad
 \frac{Q(r)}r\ge-C_2A.
\end{equation}
Define
\[
 F(r)=-\int_r^R Q(s)\,ds.
\]
Then $F\ge0$, $F'=Q$, $F''=q_1$, and $F=0$ near $R$.  On
$0\le r\le\ell$ one has
\[
 F(r)=F(0)-\frac A2r^2.
\]
The radial function $w=F(r)$, extended by zero, is smooth at both
$q$ and $\partial B_g(q,R)$.

For $r>0$,
\begin{equation}\label{eq:hess-radial-w}
 \Hess_g w=q_1(r)dr^2+Q(r)\Hess_g r.
\end{equation}
The main tangential term $Q(r)r^{-1}(g-dr^2)$ is nonpositive.  The error in \eqref{eq:hess-r} has norm at most
$C|Q|r\le C C_1\sigma R^2$.  Decrease $R_*$ so that this is at most $3\sigma/4$.  Equations
\eqref{eq:q-properties} and \eqref{eq:hess-radial-w} then give
$\Hess w\le\sigma g$, while their lower bounds give
$\Hess w\ge-C_*g$ for a fixed $C_*$.  On $r\le\ell$,
\[
 \Hess_gw=-A\Hess_g(r^2/2).
\]
After one more decrease of $R_*$, this is at most $-3g$.
Thus one may take $\rho=\ell=c\sigma R$.  Finally,
\eqref{eq:Q-properties} gives
\[
 |dw|=|Q|\le C_*\sigma R,
 \qquad
 0\le w\le R\max|Q|\le C_*\sigma R^2.
\]
\end{proof}

\section{Construction before the final pullback}

Let $N\in S^2$ be the north pole and set
\[
 q=(N,N)\in M.
\]
Apply Lemma~\ref{lem:needle} to $(M,g_0,q)$, and retain its constants
$R_*,c_*,C_*$.  Decrease $R_*$ so that $B_{g_0}(q,R_*)$ is disjoint from
$\Sigma_\delta$ for every $0<\delta\le1/100$.

Choose a small constant $\tau_0\in(0,1)$, to be fixed below.  Given
$\varepsilon>0$, put
\begin{equation}\label{eq:tau}
 \tau=\min\{\varepsilon,\tau_0\},
 \qquad
 \delta=\frac{\tau}{100},
 \qquad
 a=\frac{\tau\delta}{100(C_\beta+1)},
 \qquad
 \sigma=\frac{\tau}{100(C_*+1)}.
\end{equation}
Choose $R=R_\tau\le\min\{R_*,1\}$ so small that
\begin{equation}\label{eq:R-volume}
 \frac{\Vol(B_{g_0}(q,R),g_0)}{\Vol(M,g_0)}<\frac{\tau}{100}.
\end{equation}
Let $w=w_{\sigma,R}$ be given by Lemma~\ref{lem:needle}, and let
$\rho\ge c_*\sigma R$ be the radius in part (iii) of that lemma.  Define
\begin{equation}\label{eq:f-total}
 f=u_{a,\delta}+w,
 \qquad
 \widehat g_\tau=e^{2f}g_0.
\end{equation}
Both summands are nonnegative.  By decreasing $\tau_0$ if necessary, the preceding estimates give, uniformly in $\tau$,
\begin{equation}\label{eq:f-small}
 0\le f\le\frac1{10},
 \qquad
 |df|_{g_0}\le\frac{\tau}{20}.
\end{equation}
Furthermore, \eqref{eq:u-global}, Lemma~\ref{lem:needle}, and
\eqref{eq:tau} imply
\begin{equation}\label{eq:hess-total-upper}
 \Hess_{g_0}f
 \le\frac{\tau}{50}g_0
\end{equation}
and
\begin{equation}\label{eq:hess-total-lower}
 \Hess_{g_0}f\ge-(C_*+1)g_0.
\end{equation}

\begin{proposition}\label{prop:curvature-hat}
After choosing $\tau_0$ sufficiently small, the metrics
$\widehat g_\tau$ satisfy the following estimates.
\begin{enumerate}[label=\textup{(\roman*)}]
 \item On all of $M$,
 \[
  -\tau<\sec_{\widehat g_\tau}\le\Lambda,
  \qquad
  \Lambda:=2C_*+4.
 \]
 \item On $B_{g_0}(q,\rho)$, every sectional curvature is greater than $1$.
 \item On
 \[
  M\setminus\bigl(\Sigma_\delta\cup\overline{B_{g_0}(q,R)}\bigr),
 \]
every sectional curvature is strictly positive.
\end{enumerate}
\end{proposition}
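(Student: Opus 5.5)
Everything follows from the conformal curvature formula \eqref{eq:conformal-curvature} with $g=g_0$ and $f=u_{a,\delta}+w$ as in \eqref{eq:f-total}. For a two-plane $P$ spanned by $g_0$-orthonormal $e_1,e_2$, write
\[
 e^{2f}\sec_{\widehat g_\tau}(P)=\sec_{g_0}(P)-\Hess f(e_1,e_1)-\Hess f(e_2,e_2)+G,
\]
where $G=df(e_1)^2+df(e_2)^2-|df|^2$. Then $-|df|^2\le G\le|df|^2$, and by \eqref{eq:f-small} we have $|df|^2\le\tau^2/400$. We also have $1\le e^{2f}\le e^{1/5}$. I would estimate the three regions separately, always dividing by $e^{2f}$ at the end and using $f\ge0$.

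For (i), take the lower bound first. From \eqref{eq:product-bounds}, \eqref{eq:hess-total-upper} and the bound on $G$,
\[
 e^{2f}\sec\ge-\frac{2\tau}{50}-\frac{\tau^2}{400}>-\tau .
\]
Since $e^{-2f}\le1$, a negative lower bound is not worsened by dividing, so $\sec_{\widehat g_\tau}>-\tau$. For the upper bound, \eqref{eq:hess-total-lower} gives
\[
 e^{2f}\sec\le1+2(C_*+1)+\tau^2/400\le 2C_*+4=\Lambda,
\]
and again $e^{-2f}\le1$ preserves this. Note that the right-hand side is positive, so dividing only helps.

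For (iii), on $M\setminus(\Sigma_\delta\cup\overline{B_{g_0}(q,R)})$ the function $w$ vanishes identically, by Lemma~\ref{lem:needle}(i). Since this is an open set, $\Hess w=0$ and $dw=0$ there, so $f=u_{a,\delta}$ and Proposition~\ref{prop:base-conformal} applies verbatim, because $a<\delta$ by \eqref{eq:tau}. More precisely, $e^{2f}\sec\ge2a\delta-2a^2>0$.

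For (ii), the ball $B_{g_0}(q,\rho)\subset B_{g_0}(q,R_*)$ is disjoint from $\Sigma_\delta$. There, \eqref{eq:u-concave} gives $\Hess u_{a,\delta}\le0$, and Lemma~\ref{lem:needle}(iii) gives $\Hess w\le-3g_0$, so $\Hess f\le-3g_0$. Hence
\[
 e^{2f}\sec\ge0+6-\tau^2/400\ge 5 .
\]
The main point to watch is the conformal factor: we need $\sec\ge5e^{-2f}\ge5e^{-1/5}>1$. This is exactly why \eqref{eq:f-small} requires $f\le1/10$. Checking that this bound holds uniformly is the only delicate step. It uses $u_{a,\delta}\le 2a(1+\delta)$, which is tiny, and $w\le C_*\sigma R^2\le C_*\sigma$, which is also tiny by \eqref{eq:tau}. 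Both are small after shrinking $\tau_0$, and I would take \eqref{eq:f-small} as given.
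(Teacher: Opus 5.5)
Your proposal is correct and follows the paper's proof essentially line by line. It uses the same conformal-curvature estimates via \eqref{eq:hess-total-upper} and \eqref{eq:hess-total-lower} for (i), the reduction to Proposition~\ref{prop:base-conformal} on the open set where $w\equiv0$ for (iii), and the needle bound $\Hess w\le-3g_0$ together with $f\le 1/10$ for (ii); the only cosmetic difference is that in (ii) you get $\Hess u_{a,\delta}\le0$ from \eqref{eq:u-concave} and the disjointness of $B_{g_0}(q,R_*)$ from $\Sigma_\delta$, whereas the paper uses the cruder global bound $\Hess u_{a,\delta}\le(\tau/100)g_0$, and either works.
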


\begin{proof}
For a $g_0$-orthonormal basis $e_1,e_2$ of a plane, equations
\eqref{eq:conformal-curvature}, \eqref{eq:product-bounds},
\eqref{eq:f-small}, and \eqref{eq:hess-total-upper} give
\[
 e^{2f}\sec_{\widehat g_\tau}(P)
 \ge-\frac{2\tau}{50}-\frac{\tau^2}{400}>-\tau.
\]
Since $f\ge0$, multiplication by $e^{-2f}$ cannot make this lower bound more negative.  The lower Hessian estimate gives
\[
 e^{2f}\sec_{\widehat g_\tau}(P)
 \le1+2(C_*+1)+\frac{\tau^2}{400}
 \le2C_*+4,
\]
after decreasing $\tau_0$.

On $B_{g_0}(q,\rho)$, Lemma~\ref{lem:needle} and
\eqref{eq:u-global} give
\[
 \Hess_{g_0}f
 \le\left(-3+\frac{\tau}{100}\right)g_0.
\]
Thus
\[
 e^{2f}\sec_{\widehat g_\tau}(P)
 \ge 2\left(3-\frac{\tau}{100}\right)-\frac{\tau^2}{400}.
\]
Together with $f\le1/10$, this is greater than $1$ for small
$\tau_0$.

Finally, outside $\Sigma_\delta\cup\overline{B_{g_0}(q,R)}$ one has
$w=0$.  Proposition~\ref{prop:base-conformal} applies because
\[
 \frac a\delta=\frac{\tau}{100(C_\beta+1)}<1,
\]
and gives strict positivity there.
\end{proof}

Define the closed set
\begin{equation}\label{eq:Shat}
 \widehat S_\tau
 =\Sigma_\delta\cup
 \left(\overline{B_{g_0}(q,R)}\setminus B_{g_0}(q,\rho)\right).
\end{equation}
Proposition~\ref{prop:curvature-hat} shows that
\begin{equation}\label{eq:positive-away-hat}
 \sec_{\widehat g_\tau}>0
 \quad\text{on }M\setminus\widehat S_\tau,
\end{equation}
and all sectional curvatures on the core $B_{g_0}(q,\rho)$ are greater than $1$.

By \eqref{eq:S0-volume}, \eqref{eq:R-volume}, and
$0\le f\le1/10$,
\begin{align}
 \frac{\Vol(\widehat S_\tau,\widehat g_\tau)}
      {\Vol(M,\widehat g_\tau)}
 &\le e^{4/10}
 \frac{\Vol(\widehat S_\tau,g_0)}{\Vol(M,g_0)}\notag\\
 &<e^{4/10}\left(\frac{\tau}{50}+\frac{\tau}{100}\right)
 <\tau.\label{eq:relative-volume-hat}
\end{align}

\section{Uniform geometry and the fixed open ball}

The inequalities $0\le f\le1/10$ imply
\begin{equation}\label{eq:bilipschitz}
 g_0\le\widehat g_\tau\le e^{1/5}g_0.
\end{equation}
Thus the volume and diameter of $\widehat g_\tau$ have uniform positive lower and upper bounds.  We also need a uniform injectivity-radius bound.

\begin{lemma}[injectivity from first-order control]\label{lem:inj}
Let $N$ be a closed manifold and $g_j$ a family of smooth metrics.  Assume that
\begin{enumerate}[label=\textup{(\roman*)}]
 \item the $g_j$ are uniformly bilipschitz to one fixed smooth metric;
 \item in one fixed finite coordinate atlas, the Christoffel symbols of the $g_j$ are uniformly bounded;
 \item $\sec_{g_j}\le\Lambda$ for one constant $\Lambda$.
\end{enumerate}
Then $\inj(N,g_j)$ has a uniform positive lower bound.
\end{lemma}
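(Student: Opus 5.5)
Following Klingenberg's lemma, I will bound $\inj(N,g_j)$ below by $\min\{\pi/\sqrt{\Lambda},\ \tfrac12\ell_j\}$, where $\ell_j$ is the length of a shortest nontrivial closed geodesic of $g_j$ (when $\Lambda\le0$, read $\pi/\sqrt{\Lambda}$ as $+\infty$). Hypothesis (iii) takes care of the conjugate-point term. So everything reduces to a uniform lower bound $\ell_j\ge\ell_0>0$ for closed geodesics, or more precisely for geodesic loops. That bound is where the first-order hypotheses (i) and (ii) enter.

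\textbf{Reduction to coordinates.} Fix the reference metric $g$ and the finite atlas $\{(V_k,x_k)\}$. Choose a Lebesgue number $\lambda>0$ for the cover with respect to $g$: every $g$-ball of radius $\lambda$ lies in some chart, and inside a slightly shrunken chart it has compact closure. By (i), $c^{-1}g\le g_j\le cg$. Hence a closed curve of $g_j$-length below $\lambda/\sqrt c$ has $g$-diameter below $\lambda$, so it lies entirely inside a single chart $x_k(V_k')$. There I compare the coordinate Euclidean metric with $g$; the two are uniformly equivalent on the shrunken charts, with constants independent of $j$.

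\textbf{Main step: short geodesic loops cannot exist.} In a chart, a $g_j$-geodesic $\gamma$ parametrized by $g_j$-arclength satisfies $\ddot\gamma^m=-\Gamma^m_{ab}(g_j)\dot\gamma^a\dot\gamma^b$. The speed $|\dot\gamma|$ is uniformly bounded by (i), and $|\Gamma(g_j)|\le K$ by (ii), so $|\ddot\gamma|_{\mathrm{eucl}}\le K'$. Now suppose $\gamma$ is a loop of length $L$. Then
\[
0=\gamma(L)-\gamma(0)=L\dot\gamma(0)+\int_0^L(L-t)\ddot\gamma(t)\,dt,
\]
which gives $L|\dot\gamma(0)|\le K'L^2/2$. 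Since $|\dot\gamma(0)|\ge c_1>0$ by (i), this forces $L\ge 2c_1/K'$. Therefore every geodesic loop, and in particular every closed geodesic, has length at least $\ell_0:=\min\{\lambda/\sqrt c,\,2c_1/K'\}$.

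Combining this with Klingenberg's lemma gives $\inj(N,g_j)\ge\min\{\pi/\sqrt{\Lambda},\ell_0/2\}$. I expect the only delicate point to be the bookkeeping that keeps every constant independent of $j$. This comes down to two things: choosing the shrunken charts once, with respect to $g$ alone, and observing that short curves stay inside them because of (i). The differential-equation estimate itself is routine.
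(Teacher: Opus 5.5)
Your proof is correct and follows essentially the same route as the paper: the upper bound $\sec\le\Lambda$ controls the conjugate radius (via Klingenberg's lemma and Rauch comparison), and short geodesic loops are ruled out by confining them to a single chart using the bilipschitz bound and applying the Taylor identity $0=L\dot\gamma(0)+\int_0^L(L-t)\ddot\gamma\,dt$ together with the Christoffel bound. The only difference is cosmetic: you extract the explicit lower bound $\min\{\pi/\sqrt\Lambda,\ell_0/2\}$, whereas the paper argues by contradiction along a sequence of loops with $L_j\to0$.
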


\begin{proof}
By the Rauch comparison theorem, the conjugate radius is bounded below by a positive constant depending only on $\Lambda$.  If the injectivity radii tended to zero, the standard injectivity-radius lemma would therefore produce geodesic loops $\gamma_j:[0,L_j]\to N$ with $L_j\to0$.

Uniform bilipschitz control implies that each sufficiently short loop lies in one chart of the fixed finite atlas.  Parametrize by $g_j$-arclength and write its coordinates as $x_j(t)$.  Uniform ellipticity gives constants $c,C>0$ such that
\[
 c\le|\dot x_j(t)|\le C.
\]
The geodesic equation and the uniform Christoffel bound give
$|\ddot x_j(t)|\le A$ for one constant $A$.  Since
$x_j(L_j)=x_j(0)$,
\[
 0=\int_0^{L_j}\dot x_j(t)\,dt
 =L_j\dot x_j(0)
   +\int_0^{L_j}(L_j-t)\ddot x_j(t)\,dt.
\]
Hence
\[
 |\dot x_j(0)|\le\frac A2L_j,
\]
contradicting the lower bound $c$.
\end{proof}

For a conformal metric $e^{2f}g_0$, the Levi-Civita connections are related by
\begin{equation}\label{eq:connection-conformal}
 \widetilde\nabla_XY
 =\nabla^0_XY+df(X)Y+df(Y)X-g_0(X,Y)\nabla^0f.
\end{equation}
Thus \eqref{eq:f-small} gives a uniform Christoffel bound in a fixed atlas.  Lemma~\ref{lem:inj}, Proposition~\ref{prop:curvature-hat}, and
\eqref{eq:bilipschitz} therefore give
\begin{equation}\label{eq:inj-hat}
 \inj(M,\widehat g_\tau)\ge i_0>0
\end{equation}
for one constant $i_0$ independent of $\tau$.

It remains to place the uniformly positively curved core over one fixed open ball.  Choose once and for all a small $g_0$-geodesic ball
\[
 U=B_{g_0}(q,r_U)
\]
whose closure is contained in a normal coordinate neighborhood of $q$.  For every $\tau$, there is a diffeomorphism
\[
 F_\tau:M\longrightarrow M
\]
such that
\begin{equation}\label{eq:compress-U}
 F_\tau(U)\subset B_{g_0}(q,\rho/2).
\end{equation}
Indeed, in polar coordinates about $q$, use a smooth increasing radial map that sends $[0,r_U]$ into $[0,\rho/2]$ and agrees with the identity outside a slightly larger fixed coordinate ball.

For the original parameter $\varepsilon$, with $\tau$ as in
\eqref{eq:tau}, set
\begin{equation}\label{eq:final-def}
 g_\varepsilon=F_\tau^*\widehat g_\tau,
 \qquad
 S_\varepsilon=F_\tau^{-1}(\widehat S_\tau).
\end{equation}
The map
\[
 F_\tau:(M,g_\varepsilon)\longrightarrow(M,\widehat g_\tau)
\]
is an isometry.  Therefore all sectional-curvature bounds, relative volumes, total volumes, diameters, and injectivity radii are unchanged.  Equations
\eqref{eq:compress-U} and \eqref{eq:Shat} also give
$U\cap S_\varepsilon=\varnothing$, while Proposition~\ref{prop:curvature-hat} gives
$\sec_{g_\varepsilon}>1$ on $U$.

Taking, for example,
\[
 v_-=\Vol(M,g_0),\qquad
 v_+=e^{2/5}\Vol(M,g_0),
\]
\[
 d_-=\diam(M,g_0),\qquad
 d_+=e^{1/10}\diam(M,g_0),
\]
and using \eqref{eq:relative-volume-hat},
\eqref{eq:inj-hat}, and Proposition~\ref{prop:curvature-hat}, proves
Theorem~\ref{thm:main}.  Notice that when $\varepsilon\ge\tau_0$, one has $\tau=\tau_0$; then
$-\tau_0> -\varepsilon$ and the relative-volume estimate
$<\tau_0\le\varepsilon$ still gives the required assertions.

\section{Why this does not produce a quasi-positively curved limit}

The construction exploits two freedoms that are absent from a genuine compactness argument.
First, the set $\Sigma_\delta$ is thin in volume but spread across the manifold.  It intersects the closed directions that obstruct the existence of a globally strictly concave function, while its relative volume tends to zero.  The positive curvature created outside it is very small, of order $a\delta$.

Second, the region on which all sectional curvatures exceed $1$ has intrinsic size tending to zero before the final pullback.  The diffeomorphism $F_\tau$ places the fixed topological ball $U$ over that shrinking core without changing any intrinsic global bound.  Consequently the final metrics need not be precompact in $C^2$ in the fixed differentiable gauge.  The family therefore does not yield a quasi-positively curved limit metric on $S^2\times S^2$.

\end{document}